\documentclass[letter,10pt]{amsart}
\usepackage{amsmath, amsfonts, amssymb,amscd,graphics,graphicx,color,eucal,setspace} 
\usepackage{latexsym}   
\usepackage{color}

\usepackage{url}

\numberwithin{equation}{section}

\theoremstyle{plain}
\newtheorem{thm}{Theorem}[section]
\newtheorem{lem}[thm]{Lemma} 

\newtheorem{prop}[thm]{Proposition} 

\theoremstyle{definition}
\newtheorem*{rmk}{Remark}

\newtheorem*{defn}{Definition}

\def\QC{\mathbb C}

\def\S1{S}

\def\q{r}

\def\f{\theta}

\def\xi{p}

%
%

%
%

\begin{document}
  
\title[Equivariant cohomology rings of peterson varieties]{The equivariant cohomology rings of peterson varieties in all Lie types}
\author {Megumi Harada} 
\address{Department of Mathematics and Statistics, McMaster University, 1280 Main Street West, Hamilton, Ontario L8S4K1, Canada}
\email{Megumi.Harada@math.mcmaster.ca}
\urladdr{\url{http://www.math.mcmaster.ca/~haradam}}

\author {Tatsuya Horiguchi}
\address{Department of Mathematics, Osaka City University, Sumiyoshi-ku, Osaka 558-8585, Japan}
\email{d13saR0z06@ex.media.osaka-cu.ac.jp}

\author {Mikiya Masuda}
\address{Department of Mathematics, Osaka City University, Sumiyoshi-ku, Osaka 558-8585, Japan}
\email{masuda@sci.osaka-cu.ac.jp}
\date{\today}

\keywords{equivariant cohomology, Peterson varieties, flag varieties, Monk formula, Giambelli formula} 

\subjclass[2000]{Primary: 55N91, Secondary: 14N15} 

\begin{abstract}
Let $G$ be a complex semisimple linear algebraic group and let $Pet$ be the associated \textbf{Peterson variety} in the flag variety $G/B$. 
The main theorem of this note gives an efficient presentation of the equivariant cohomology ring $H^*_S(Pet)$ of the 
Peterson variety as a quotient of a polynomial ring by an ideal $J$ generated by quadratic polynomials, in the spirit of the Borel presentation of the cohomology of the flag variety. Here the group $S \cong \mathbb{C}^*$ is a certain subgroup of a maximal torus $T$ of $G$. 
Our description of the ideal $J$ uses the Cartan matrix and is uniform across Lie types. In our arguments we use the Monk formula and Giambelli formula for the equivariant cohomology rings of Peterson varieties for all Lie types, as obtained in the work of Drellich. Our result generalizes a previous theorem of Fukukawa-Harada-Masuda, which was only for Lie type $A$. 
\end{abstract}

\maketitle

\setcounter{tocdepth}{1}
\tableofcontents

\section{Introduction} \label{sect:1}

The main result of this paper is an explicit and efficient
presentation of the equivariant cohomology ring
of Peterson
varieties in all Lie types 
in terms of generators and relations, in the spirit of the
well-known Borel presentation of the cohomology of the flag
variety. 
We briefly recall the setting of our results. 
The \textbf{Peterson variety} has been much studied due to its relation, for example, to 
the quantum cohomology of the flag variety \cite{Kos96,
  Rie03}. Thus it is natural to study their topology, e.g. the
structure of their (equivariant) cohomology rings. 
In Lie type $A$ the Peterson variety $Pet$ can be easily described in a concrete manner. Specifically, it is defined to be the 
following subvariety of the full flag variety
$\mathcal{F}\ell ags(\mathbb{C}^n)$:
\begin{equation}\label{eq:def intro}
Pet := \{ V_\bullet \, \mid \,  NV_i \subseteq
V_{i+1} \textup{ for all } i = 1, \ldots, n-1\}
\end{equation}
where $V_\bullet$ denotes a nested sequence 
$0 \subseteq V_1 \subseteq V_2 \subseteq \cdots \subseteq V_{n-1} \subseteq V_n = \mathbb{C}^n$ of subspaces of $\mathbb{C}^n$ and $\dim_\mathbb{C} V_i = i$ for all $i$ and 
$N: \mathbb{C}^n \to \mathbb{C}^n$ denotes a principal nilpotent operator.
For a general complex semisimple linear algebraic group $G$, there is also a concrete Lie-theoretic description of the Peterson variety as a subvariety of its flag variety $G/B$,
which we recall in Section~\ref{sect:2} below. 

In addition, there is a natural subgroup $S \cong \mathbb{C}^*$ (which is a subgroup of the maximal torus $T$ of $G$) which acts 
on $Pet$ (for details see Section~\ref{sect:2}). 
The inclusion of $Pet$
into $G/B$ induces a natural ring homomorphism
\begin{equation}\label{eq:intro-proj}
H^*_T(G/B) \to H^*_{S}(Pet). 
\end{equation} 
The purpose of this manuscript is to give
an efficient presentation of the equivariant cohomology ring
$H^*_{S}(Pet)$ in all Lie types, using the recent work of Drellich \cite{d} which gives a Monk formula and a Giambelli formula for Peterson varieties in all Lie types. In particular, we are able to obtain a uniform description of the relevant ideal $J$, valid for all Lie types, using the Cartan matrix associated to the Lie algebra $\mathfrak{g}$ of $G$. In particular, our analysis shows that the ideal $J$ is generated by quadratics.  

Our proof uses Hilbert series and regular sequences, in a similar spirit to previous work of Fukukawa and the first and third authors \cite{f-h-m}, which was in turn motivated by the work of \cite{FukukawaIshidaMasuda-typeA, Fukukawa-G2} which computes the graph cohomology of the GKM graphs of the flag varieties of classical type and of $G_2$. 

Brion and Carrell give a different description of the equivariant cohomology rings of Peterson (and other nilpotent Hessenberg) varieties as the coordinate rings of certain affine curves \cite[Theorem 3]{Brion-Carrell} using techniques from algebraic geometry. 
In a related direction, by extending the ideas of the present manuscript, we give in \cite{AbeHaradaHoriguchiMasuda} a uniform description (with explicit generators and relations) of the equivariant cohomology rings of arbitrary regular nilpotent Hessenberg varieties  -- a class of varieties which includes the Peterson variety and the full flag variety -- in Lie type A. 

This paper is organized as follows. We briefly recall the necessary
background in Section~\ref{sect:2}. We derive the relevant quadratic relations in Section~\ref{sect:3}. In particular, a key computation is contained in Lemma~\ref{lemcartan}. The main theorem is proven in Section~\ref{sect:4}.

\medskip
\noindent
\textbf{Acknowledgements.} The first author is supported in part by an NSERC Discovery Grant (Individual), an Ontario Early Researcher Award, a Canada Research Chair (Tier 2) Award, and a Japan Society for the Promotion of Science Invitation Fellowship for Research in Japan (Fellowship ID L-13517). The first author additionally thanks the Osaka City University Advanced Mathematics Institute for its hospitality, which made this collaboration possible. The third author is partially supported by a JSPS Grant-in-Aid for Scientific Research 25400095.

\section{Background on the Peterson variety} \label{sect:2}

In this section we record some facts about Peterson varieties which we require in this manuscript. 

Let $G$ be a complex semisimple linear algebraic group of rank $n$. 
We fix $B$ a Borel subgroup and $T$ a maximal torus of $G$ such that $T \subseteq B \subseteq G$. These choices then determine 
the following data: 

\begin{itemize} 




\item a set of simple roots $\Delta=\{\alpha_1,\dots,\alpha_n\}$,

\item the associated Weyl group $W$, 

\item the associated Lie algebras $\mathfrak{t}\subseteq\mathfrak{b}\subseteq\mathfrak{g}$, and 

\item root spaces $\mathfrak{g}_{\alpha} \subseteq \mathfrak{g}$ for each root $\alpha$.

\end{itemize}

\begin{defn} \label{defpet}
Let $E_{\alpha}$ be a basis element of the root space $\mathfrak{g}_{\alpha}$ and let $N_0=\sum_{\alpha\in\Delta}E_{\alpha }$, a regular nilpotent operator. In this setting we may define the \textbf{Peterson variety (associated to $\mathfrak{g}$)} as 
\begin{equation*} \label{eqS}
Pet:=\{gB\in G/B \mid Ad(g^{-1})(N_0)\in \mathfrak{b}\oplus\displaystyle\bigoplus_{\alpha\in-\Delta}\mathfrak{g}_{\alpha}\}. 
\end{equation*}
\end{defn}

As is well-known, the maximal torus $T$ acts on $G/B$ by left multiplication. This action does not in general preserve 
the Peterson variety. However, using the homomorphism 
$\phi:T\rightarrow(\mathbb{C}^*)^n$ defined by $t\mapsto (\alpha_1(t),\dots,\alpha_n(t))$
and defining $S$ to be the connected component of the identity in
\begin{equation*} 
\phi^{-1}(\{(c,c,\dots,c) \mid c\in\mathbb{C}^*\})
\end{equation*}
it can be seen that the restriction of the $T$-action on $G/B$ to the subgroup $S$ does preserve $Pet$ 
(\cite[Lemma 5.1]{h-t}). 

Next recall that the $T$-fixed points of $G/B$ are in bijective correspondence with the Weyl group $W$ of $G$. Moreover, since the 
$S$-fixed points $Pet^S$ of the Peterson variety satisfy the relation
\[
Pet^S=Pet\cap (G/B)^T
\]
we may view $Pet^S$ as a subset of the Weyl group $W$. 
Indeed, the fixed point set $Pet^S$ may be described concretely as follows. For 
a subset $K$ of the set $\Delta$ simple roots, let $W_K$ denote the parabolic subgroup generated by $K$ and let $w_K$ denote 
the longest element of $W_K$. Then it is known \cite[Proposition 5.8]{h-t} that 
\begin{equation*}
Pet^S=\{w_K \mid K\subseteq\Delta \}.
\end{equation*}

Here and below we always use complex coefficients $\mathbb{C}$ for our cohomology rings and hence omit it from our notation. 
Let $\alpha_i: T\rightarrow \mathbb{C}^*$ be a homomorphism which thus determines 
a complex $1$-dimensional representation of $T$. Let $ET \times_T \mathbb{C} \rightarrow BT$ be the corresponding complex line bundle 
and by slight abuse of notation we let $\alpha_i \in H^2(BT)$ also denote the corresponding first Chern class. With this notation in place 
we have 
\[
H^*(BT)=\mathbb{C}[\alpha_1,\dots,\alpha_n]. 
\]
Consider the $1$-dimensional representation of the diagonal subgroup $\{(c,c,\ldots, c): c \in \mathbb{C}^*\} \subseteq (\mathbb{C}^*)^n$
obtained via the projection 
$(c,c,\dots,c)\to c$. Composing with the restriction to $S$ of the above homomorphism $\phi$, we obtain
a $1$-dimensional representation of $S$ and an associated line bundle $ES \times_S \mathbb{C} \to BS$ with first Chern class denoted $t \in H^2(BS)$. With this notation in place we have 
\[
H^*(BS)=\mathbb{C}[t].
\]
Next we recall that the inclusion homomorphism $S \hookrightarrow T$ induces a homomorphism 
$\pi\colon H^*(BT)\to H^*(BS)$ and from the definition of $\phi$ we obtain 
\begin{equation}\label{eq:2-1}
\pi(\alpha_i)=t\quad (i=1,2,\dots,n). 
\end{equation}

We now consider the following commutative diagram
\begin{equation}\label{eq:2-2}
\begin{CD}
H^{\ast}_{T}(G/B)@>
>> \displaystyle{\bigoplus_{w\in (G/B)^T=W} H^{\ast}_{T}(w)}\\
@V{\rho}VV @V{\pi}VV\\
H^{\ast}_{S}(Pet)@>
>> \displaystyle{\bigoplus_{w\in Pet^S\subseteq W} H^{\ast}_{S}(w)}
\end{CD}
\end{equation}
where all the maps are induced from inclusions of subgroups or inclusions of subspaces. As is well-known, 
the odd cohomology $H^{odd}(G/B)$ of $G/B$ vanishes. The same holds for the Peterson variety, i.e. $H^{odd}(Pet) = 0$
\cite{pr13}. Thus we obtain that both horizontal maps in~\eqref{eq:2-2} are injective, and 
we may identify $H^{\ast}_{T}(G/B)$ (respectively $H^{\ast}_{S}(Pet)$) with its image under these maps.
For $w \in (G/B)^T \cong W$ (respectively $w \in Pet^S \subseteq W$) and 
$f\in H^{\ast}_{T}(G/B)$ (respectively $f\in H^{\ast}_S(Pet)$)
we will denote by $f(w)$ the 
restriction of $f$ to the $w$-th factor 
$H_T^{\ast}(w)=H^{\ast}(BT)=\mathbb{C}[\alpha_1,\dots,\alpha_n]$ (resp. $H^{\ast}_S(w)=H^{\ast}(BS)=\mathbb{C}[t]$)
in the direct products on the right hand sides of~\eqref{eq:2-2}.

For $v \in W$, we let $\sigma_v$ denote the corresponding equivariant Schubert class in $H^{\ast}_{T}(G/B)$, and let $p_v$ denote its image $\rho(\sigma_v)$ in $H^{\ast}_{S}(Pet)$. We call $p_v$ a \textbf{Peterson Schubert class} (associated to $v$). 
Let $s_i$ be the simple reflection corresponding to a simple root $\alpha_i$. 
The vertices of the Dynkin diagram corresponding to the set of simple roots 
$\Delta=\{\alpha_1,\dots,\alpha_n\}$ is in 1-1 correspondence 
with $\Delta$. Here and below, we assume to be fixed an ordering of the simple roots 
as given in \cite[Figure 1]{d} (which in turn agrees with the standard ordering in \cite[p.58]{hu80}). 
With respect to this ordering, given 
any subset $K=\{\alpha_{a_1},\alpha_{a_2},\dots,\alpha_{a_k} \}$ of the 
simple roots with $a_1<a_2<\dots<a_k$, we define an element $v_K$ of $W$ by the formula 
\begin{equation*}
v_K:=s_{a_1}s_{a_2}\cdots s_{a_k}. 
\end{equation*}
The Peterson Schubert classes $p_{v_K}$ corresponding to the Weyl group elements $v_K$ defined above satisfy the following property. 

\begin{prop}[Theorem 3.5 in \cite{d}] \label{thmpvk}
The Peterson Schubert classes $\{p_{v_K} \mid K\subseteq\Delta \}$ form a $\mathbb{C}[t]$-module basis for $H^{\ast}_{S}(Pet)$. 
\end{prop}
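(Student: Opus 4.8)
The plan is to prove that $\{p_{v_K} \mid K \subseteq \Delta\}$ is a $\mathbb{C}[t]$-module basis by combining two ingredients: a spanning statement and a localization/upper-triangularity argument establishing linear independence. First I would recall that by Proposition~\ref{thmpvk}'s hypotheses (or rather, by general GKM theory applied to $Pet$, whose odd cohomology vanishes by \cite{pr13}), $H^*_S(Pet)$ is a free $\mathbb{C}[t]$-module whose rank equals $|Pet^S| = |\{w_K \mid K \subseteq \Delta\}| = 2^n$, the number of subsets of $\Delta$. So it suffices to exhibit $2^n$ classes that are linearly independent over $\mathbb{C}[t]$, and the natural candidates are the $p_{v_K}$.

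The heart of the argument is to show linear independence via restriction to fixed points. Using the injection $H^*_S(Pet) \hookrightarrow \bigoplus_{w_K \in Pet^S} H^*_S(w_K) = \bigoplus_{K} \mathbb{C}[t]$ from diagram~\eqref{eq:2-2}, I would compute $p_{v_K}(w_L)$, the restriction of the Peterson Schubert class $p_{v_K}$ to the fixed point $w_L$, for subsets $K, L \subseteq \Delta$. The key claims are: (i) $p_{v_K}(w_L) = 0$ unless $K \subseteq L$, and (ii) $p_{v_K}(w_K) \neq 0$ — in fact it should be a nonzero scalar multiple of $t^{|K|}$. Claim (i) follows from the corresponding upper-triangularity of ordinary equivariant Schubert classes in $H^*_T(G/B)$: $\sigma_{v_K}(w_L) = 0$ unless $v_K \leq w_L$ in Bruhat order, combined with the combinatorial fact that $v_K = s_{a_1}\cdots s_{a_k} \leq w_L$ forces $K \subseteq L$ (since $w_L$ is the longest element of the parabolic $W_L$, its support as a reduced word is exactly $L$, and any subword's support is contained in $L$). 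Claim (ii) follows from the known formula $\sigma_{v_K}(w_K)$ being a product of positive roots, none of which vanish under the specialization $\pi(\alpha_i) = t$; after applying $\pi$ this becomes a nonzero multiple of $t^{\ell(v_K)} = t^{|K|}$ since each root restricts to a positive integer multiple of $t$. Ordering the subsets $K$ by cardinality (or any linear extension of inclusion), the matrix $(p_{v_K}(w_L))_{K,L}$ is triangular with nonzero diagonal entries, hence the $p_{v_K}$ are linearly independent over $\mathbb{C}[t]$ (indeed over the fraction field $\mathbb{C}(t)$).

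Combining the two ingredients: we have $2^n$ linearly independent elements in a free $\mathbb{C}[t]$-module of rank $2^n$, so they span after tensoring with $\mathbb{C}(t)$. To upgrade to a basis over $\mathbb{C}[t]$ itself, I would check that the triangular change-of-basis matrix has determinant a nonzero scalar multiple of a power of $t$ — but more carefully, one wants the $p_{v_K}$ to span $H^*_S(Pet)$, not just a finite-index submodule. For this I would invoke that $\rho \colon H^*_T(G/B) \to H^*_S(Pet)$ is surjective (which follows since the horizontal maps in~\eqref{eq:2-2} are injective and $\pi$ on the right is the obvious surjection $\mathbb{C}[\alpha_1,\dots,\alpha_n] \to \mathbb{C}[t]$, so every class in $H^*_S(Pet)$ lifts), and that the ordinary Schubert classes $\sigma_v$ span $H^*_T(G/B)$ over $\mathbb{C}[\alpha_1,\dots,\alpha_n]$; pushing forward and reducing modulo the relations $\alpha_i \mapsto t$ shows the $p_v$ span $H^*_S(Pet)$ over $\mathbb{C}[t]$. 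A standard argument (e.g. using that $H^{odd}(Pet) = 0$ so that the Leray–Hirsch-type count of ranks in each degree matches) then forces the sub-collection $\{p_{v_K}\}$ to already be a basis. The main obstacle I anticipate is the precise bookkeeping in claim (ii) — pinning down that $\sigma_{v_K}(w_K)$ specializes to a genuinely nonzero element of $\mathbb{C}[t]$ rather than vanishing — which requires knowing the leading-term structure of equivariant Schubert class restrictions (via the AJS/Billey formula: $\sigma_{v_K}(w_K)$ is a sum over reduced-word substrings, and the "diagonal" term $\prod$ of roots survives specialization with a positive coefficient, while one must check no cancellation occurs among the other terms after setting all $\alpha_i = t$ — here using that all contributing terms are products of positive roots, hence specialize to positive multiples of $t^{|K|}$, so no cancellation is possible).
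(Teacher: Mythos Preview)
The paper does not give its own proof of this proposition: it is stated purely as a citation of Theorem~3.5 in Drellich~\cite{d}, and the paper immediately uses it (to deduce surjectivity of $\rho$ and Proposition~\ref{Propgene}) without supplying any argument. So there is no proof in the paper to compare against.

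Your approach is essentially the standard one and matches what Drellich does: upper-triangularity of the restriction matrix $(p_{v_K}(w_L))_{K,L}$ with respect to inclusion of subsets, together with nonvanishing of the diagonal via the positivity in Billey's formula (each $r(j,w_K)$ is a positive root, hence maps under $\pi$ to a positive multiple of $t$, so no cancellation can occur). That part is correct and cleanly identified.

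There is one soft spot. Your justification that $\rho$ is surjective (``the horizontal maps are injective and $\pi$ on the right is surjective, so every class lifts'') does not follow formally from the commutative square~\eqref{eq:2-2}; injectivity of the horizontals and surjectivity of the right vertical do not by themselves force surjectivity of the left vertical. Note also that in the paper's logic, surjectivity of $\rho$ is \emph{deduced from} Proposition~\ref{thmpvk}, so invoking it here risks circularity. Fortunately you do not need it: once you have $\mathbb{C}[t]$-linear independence of the $2^n$ classes $p_{v_K}$, the submodule they generate is free with Hilbert series $\sum_{K}s^{2|K|}/(1-s^2)=(1+s^2)^n/(1-s^2)$, which equals $F(H^*_S(Pet),s)$ by~\eqref{eq:2-3}. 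Since the dimensions agree in every degree, the inclusion is an equality. This replaces your spanning paragraph entirely and avoids the detour through all $p_v$ for $v\in W$.
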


It follows from Proposition~\ref{thmpvk} that the $\rho$ in \eqref{eq:2-2} is surjective. 
It is well-known that the equivariant Schubert classes $\sigma_{s_i}$ generate
$H^{\ast}_{T}(G/B)$ as a 
$\mathbb{C}[\alpha_1,\dots,\alpha_n]$-algebra. From the surjectivity of the homomorphism $\rho$ we immediately obtain the following. 

\begin{prop} \label{Propgene} 
The Peterson Schubert classes $p_{s_i}$ $(i=1,2,\dots,n)$
generate $H^{\ast}_{S}(Pet)$ as a $\mathbb{C}[t]$-algebra.  
\end{prop}

Since the odd cohomology $H^{odd}(Pet)$ of the Peterson variety vanishes, we know that as a $\mathbb{C}[t]$-module the equivariant cohomology $H^*_S(Pet)$ is isomorphic to $\mathbb{C}[t]\otimes H^*(Pet)$. 
It is known \cite[Theorem 3]{Brion-Carrell} that 
\begin{equation} \label{eq:2-3}
\begin{split}
F(H^*(Pet),s)&=(1+s^2)^n\\
F(H^*_S(Pet),s)&=\frac{(1+s^2)^n}{1-s^2}
\end{split}
\end{equation}
where the left hand sides denotes the Hilbert series of the graded rings $H^{\ast}(Pet)$ and $H^*_S(Pet)$ with respect to the variable $s$ (of degree $1$).

Fix an integer $i$ with $1 \leq i \leq n$ and a subset $K \subseteq \Delta$.
From 
Proposition~\ref{thmpvk} it follows that the product
$p_{s_i}\cdot p_{v_K}$ can be written uniquely as a $\mathbb{C}[t]$-linear 
combination of the $p_{v_J}$ (for $J\subseteq\Delta$). 
The so-called Monk's formula gives a concrete computation of the coefficients in this linear combination. 

\begin{thm}[Monk's formula for Peterson varieties for all Lie types, Theorem 4.2 in \cite{d}] \label{thmMonk} 
The Peterson Schubert classes satisfy the following relation: 
\begin{equation*}\label{eqMonk}
p_{s_i}\cdot p_{v_K}=p_{s_i}(w_K)\cdot p_{v_K}+\displaystyle\sum_{\substack{J\supset K \\ |J|=|K|+1}} c_{i,K}^J\cdot p_{v_J}
\end{equation*}
where the coefficient $c_{i,K}^J$ are non-negative rational numbers. More specifically, we have 
\begin{equation*}
c_{i,K}^J=(p_{s_i}(w_J)-p_{s_i}(w_K))\cdot \frac {p_{v_K}(w_J)}{p_{v_J}(w_J)}. 
\end{equation*}
\end{thm}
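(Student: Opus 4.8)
The plan is to deduce Monk's formula from localization at the $S$-fixed points $Pet^S=\{w_K \mid K\subseteq\Delta\}$ together with the triangular structure of the Peterson Schubert basis with respect to inclusion of subsets. Since the bottom horizontal map of \eqref{eq:2-2} is an injective ring homomorphism and multiplication in $\bigoplus_{K\subseteq\Delta}H^*_S(w_K)=\bigoplus_{K\subseteq\Delta}\mathbb{C}[t]$ is componentwise, for each $J\subseteq\Delta$ we have
\[
(p_{s_i}\cdot p_{v_K})(w_J)=p_{s_i}(w_J)\cdot p_{v_K}(w_J)\in\mathbb{C}[t].
\]
By Proposition~\ref{thmpvk} we may write $p_{s_i}\cdot p_{v_K}=\sum_{J\subseteq\Delta}c_J\,p_{v_J}$ uniquely with $c_J\in\mathbb{C}[t]$, and the strategy is to pin the $c_J$ down from these pointwise identities by solving a triangular linear system.

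The two structural inputs I would record first are: \textbf{(i)} $p_{v_J}(w_L)=0$ unless $J\subseteq L$; and \textbf{(ii)} $p_{v_J}(w_J)\neq 0$. Both come from $p_{v_J}=\rho(\sigma_{v_J})$ together with standard facts about equivariant Schubert classes: $\sigma_v(w)=0$ unless $v\le w$ in Bruhat order, while for $v\le w$ the restriction $\sigma_v(w)$ is, up to an overall sign, a nonzero sum of products of positive roots, hence a homogeneous polynomial of degree $\ell(v)$ in $\alpha_1,\dots,\alpha_n$ that does not vanish under the substitution $\alpha_i\mapsto t$ of \eqref{eq:2-1} (e.g. by Billey's formula for Schubert class restrictions). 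Thus $p_{v_J}(w_L)=\pi(\sigma_{v_J}(w_L))$ is a scalar multiple of $t^{|J|}$ which is nonzero exactly when $v_J\le w_L$; combined with the elementary combinatorial fact that $v_J=s_{a_1}\cdots s_{a_k}$ is reduced and $v_J\le w_L\iff v_J\in W_L\iff J\subseteq L$, this yields (i) and (ii). These two properties say that the matrix $\bigl(p_{v_J}(w_L)\bigr)$ is upper triangular with nonzero diagonal for any total order refining inclusion, so the $c_J$ can be solved for recursively, starting from the inclusion-minimal subsets in the support of $p_{s_i}\cdot p_{v_K}$ and moving upward.

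Carrying out the recursion on $g:=p_{s_i}\cdot p_{v_K}$: applying (i) to $p_{v_K}$ and to $p_{s_i}=p_{v_{\{i\}}}$ shows $g(w_L)=0$ unless $K\cup\{i\}\subseteq L$, hence, by triangularity, $c_J=0$ unless $K\cup\{i\}\subseteq J$. A homogeneity count — $p_{v_J}$ has degree $|J|=\ell(v_J)$ while $p_{s_i}\cdot p_{v_K}$ has degree $|K|+1$ — forces $c_J\in\mathbb{C}\cdot t^{|K|+1-|J|}$, so $c_J=0$ unless $|J|\le|K|+1$. Only $J=K$ and those $J$ with $K\subset J$, $|J|=|K|+1$ can therefore have $c_J\neq0$ (and when $i\notin K$ several of the latter already vanish, consistently with the stated formula, since $p_{s_i}(w_K)=p_{s_i}(w_J)=0$ for $i\notin J$), which is exactly the index set in the statement. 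Evaluating $g=\sum c_J p_{v_J}$ at $w_K$ gives $c_K\,p_{v_K}(w_K)=g(w_K)=p_{s_i}(w_K)\,p_{v_K}(w_K)$, so $c_K=p_{s_i}(w_K)$; and for each $J$ with $K\subset J$, $|J|=|K|+1$, the only surviving indices contained in $J$ are $K$ and $J$ itself, so evaluating at $w_J$ gives
\[
c_J\,p_{v_J}(w_J)=g(w_J)-c_K\,p_{v_K}(w_J)=\bigl(p_{s_i}(w_J)-p_{s_i}(w_K)\bigr)\,p_{v_K}(w_J),
\]
which is precisely the asserted value of $c_{i,K}^J=c_J$ after dividing by $p_{v_J}(w_J)\neq0$. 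That $c_{i,K}^J$ is a rational number rather than a genuine polynomial is then immediate, since $p_{v_J}(w_J)$ and $\bigl(p_{s_i}(w_J)-p_{s_i}(w_K)\bigr)\,p_{v_K}(w_J)$ both lie in $\mathbb{C}\cdot t^{|K|+1}$.

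The one genuinely nonformal ingredient is the asserted \emph{non-negativity} of the $c_{i,K}^J$ and, relatedly, the production of usable closed forms for the fixed-point restrictions $p_{s_i}(w_K)$ and $p_{v_K}(w_J)$ entering the formula — which is essentially the content of the accompanying Giambelli-type computation in \cite{d}. I expect this to be the main obstacle: it is a positivity statement of Schubert-calculus type, and obtaining it uniformly across Lie types requires either deducing it from the general positivity of equivariant Schubert structure constants or computing the relevant restrictions explicitly in terms of the root system. The remaining steps — injectivity of localization from \eqref{eq:2-2}, the triangularity (i)--(ii), and the recursive solve — are then routine.
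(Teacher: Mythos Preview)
The paper does not supply its own proof of this theorem: it is quoted as Theorem~4.2 of Drellich \cite{d} and used as a black box, so there is no argument in the present paper to compare your proposal against. What you have written is really a reconstruction of Drellich's argument rather than of anything here.

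As such a reconstruction, your derivation of the \emph{formula} for $c_{i,K}^J$ is correct and is the standard mechanism (and essentially what Drellich does): the classes $p_{v_J}$ restrict upper-triangularly to the fixed points $\{w_L\}$ with nonzero diagonal, so the coefficients in $p_{s_i}\cdot p_{v_K}=\sum_J c_J\,p_{v_J}$ are pinned down by evaluating at $w_K$ and then at each $w_J$ with $|J|=|K|+1$. Your justification of the support condition (via $v_J\le w_L\iff v_J\in W_L\iff J\subseteq L$) and of the vanishing for $|J|>|K|+1$ by degree are fine; and your observation that each $p_{v_J}(w_L)$ is a \emph{positive} rational multiple of $t^{|J|}$---because every term $r(j,w)$ in Billey's formula is a positive root and hence maps to a positive multiple of $t$ under \eqref{eq:2-1}---simultaneously gives (ii) and the rationality of $c_{i,K}^J$. (The phrase ``up to an overall sign'' is unnecessary: Billey's formula already gives a nonnegative sum.)

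You are also right that the only genuinely nonformal assertion is the \emph{non-negativity} $c_{i,K}^J\ge 0$, and that this is where the real content of \cite{d} lies. One small correction: it is not the Giambelli computation that supplies this. In Drellich's argument positivity comes from analyzing the explicit formula factor by factor: $p_{v_K}(w_J)$ and $p_{v_J}(w_J)$ are positive multiples of powers of $t$ by the Billey-formula argument you already gave, while $p_{s_i}(w_J)-p_{s_i}(w_K)\ge 0$ is a monotonicity statement along $w_K\le w_J$ that is checked directly. For the purposes of the present paper this is moot anyway: only the displayed identity and the explicit expression for $c_{i,K}^J$ are ever used downstream (in \eqref{eq:3-1}--\eqref{eq:3-2} and Lemmas~\ref{lemcommute}--\ref{lemcartan}), never the sign.
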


Next we recall the so-called 
Giambelli's formula. From  Proposition~\ref{Propgene} it follows that each module generator $p_{v_K}$ can be expressed as a polynomial (with $\mathbb{C}[t]$ coefficients) in the (ring) generators $p_{s_i}$. The Giambelli formula gives a concrete expression for this polynomial as follows. 

\begin{thm}[Giambelli's formula for Peterson varieties for all Lie types,  Theorem 5.5 in \cite{d}] \label{thmGiambelli} 
Suppose $K$ is a subset of the simple roots $\Delta$. Assume that the Dynkin diagram corresponding to the subset $K$ is connected. Then 
\begin{equation*}\label{eqGiambelli}
\frac{|K|!}{|\mathcal R(v_K)|} \cdot p_{v_K}=\displaystyle\prod_{\alpha_i\in K} p_{s_i} 
\end{equation*}
where $|\mathcal R(v_K)|$ denotes the number of distinct reduced-word expressions for $v_K$. 
\end{thm}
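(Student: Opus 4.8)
The plan is to prove the identity by induction on $|K|$, using Monk's formula (Theorem~\ref{thmMonk}) together with the localization maps of~\eqref{eq:2-2}. The essential input beyond Monk's formula is the standard fact that an equivariant Schubert class $\sigma_v$ restricts nontrivially at a $T$-fixed point $w$ precisely when $v\le w$ in Bruhat order, and that $\sigma_v|_w$ is then a nonzero sum, with nonnegative integer coefficients, of products of positive roots (for instance by Billey's formula). Since the homomorphism $\pi$ of~\eqref{eq:2-1} sends each $\alpha_i$ to $t$, it sends a positive root to its height times $t$, hence sends such a sum to a nonzero element of $\mathbb{C}[t]$; therefore $p_v(w)\neq 0$ if and only if $v\le w$. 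Applying this with $v=s_i$ and $w=w_K$, and using that $w_K$ is the longest element of $W_K$, yields $p_{s_i}(w_K)\neq 0\iff \alpha_i\in K$; applying it with $v=v_J$, and using that $s_{a_1}\cdots s_{a_k}$ is a reduced word for $v_K$ (which holds because $\langle\alpha_a,\alpha_b^\vee\rangle\le 0$ for distinct simple roots), yields $p_{v_J}(w_K)\neq 0\iff J\subseteq K$.

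The base case $|K|=1$ is immediate, since then both sides equal $p_{s_i}$ and $|K|!/|\mathcal R(v_K)|=1$. For the inductive step assume $|K|=k\ge 2$ and that the identity holds for connected subsets of size $k-1$. As every finite-type Dynkin diagram is a tree, the connected diagram on $K$ has a leaf $\alpha_j$, and $K':=K\setminus\{\alpha_j\}$ is again connected, of size $k-1$. Since the classes $p_{s_i}$ commute, the inductive hypothesis gives $\prod_{\alpha_i\in K}p_{s_i}=p_{s_j}\cdot\prod_{\alpha_i\in K'}p_{s_i}=\frac{(k-1)!}{|\mathcal R(v_{K'})|}\,p_{s_j}\cdot p_{v_{K'}}$. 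Expanding $p_{s_j}\cdot p_{v_{K'}}$ by Monk's formula, the diagonal term $p_{s_j}(w_{K'})\cdot p_{v_{K'}}$ vanishes because $\alpha_j\notin K'$; and for $J\supset K'$ with $|J|=k$ the coefficient $c_{j,K'}^J=p_{s_j}(w_J)\cdot p_{v_{K'}}(w_J)/p_{v_J}(w_J)$ (using $p_{s_j}(w_{K'})=0$) vanishes unless $\alpha_j\in J$, i.e.\ unless $J=K$. Hence $p_{s_j}\cdot p_{v_{K'}}=c_{j,K'}^K\,p_{v_K}$, so $\prod_{\alpha_i\in K}p_{s_i}$ is a scalar multiple of $p_{v_K}$.

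It remains to evaluate that scalar, which by the above equals $\frac{(k-1)!}{|\mathcal R(v_{K'})|}\,c_{j,K'}^K$ with $c_{j,K'}^K=p_{s_j}(w_K)\cdot p_{v_{K'}}(w_K)/p_{v_K}(w_K)$. Since $\frac{(k-1)!}{|\mathcal R(v_{K'})|}\cdot\frac{k\,|\mathcal R(v_{K'})|}{|\mathcal R(v_K)|}=\frac{k!}{|\mathcal R(v_K)|}$, the theorem reduces to the numerical identity
\begin{equation*}
p_{s_j}(w_K)\cdot\frac{p_{v_{K'}}(w_K)}{p_{v_K}(w_K)}=\frac{k\,|\mathcal R(v_{K'})|}{|\mathcal R(v_K)|}.
\end{equation*}
The restrictions on the left-hand side can be computed explicitly via Billey's formula applied to a reduced word of $w_K$ (see also the localization computations in~\cite{h-t,d}), while the right-hand side is governed by the combinatorics of reduced words of $v_K$ --- which one analyzes by recording the position of the leaf reflection $s_j$ within a reduced word and exploiting the commutation relations forced by $\alpha_j$ being a leaf. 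Carrying out this comparison uniformly across all Lie types is, I expect, the main obstacle; it is precisely the step where connectedness of $K$ and the special structure of the leaf $\alpha_j$ (its unique neighbour in $K$) are used. Once it is established the induction closes and the formula follows.
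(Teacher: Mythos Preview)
The present paper does not prove this theorem: it is quoted from Drellich~\cite{d} (Theorem~5.5 there) and used as a black box, so there is no proof here to compare against.

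That said, your inductive scheme is exactly the one Drellich employs: remove a leaf $\alpha_j$ from the Dynkin diagram of $K$, apply Monk's formula to $p_{s_j}\cdot p_{v_{K'}}$, observe that only the $J=K$ term survives, and then identify the surviving coefficient. Your reduction to the numerical identity
\[
c_{j,K'}^{K}=p_{s_j}(w_K)\cdot\frac{p_{v_{K'}}(w_K)}{p_{v_K}(w_K)}=\frac{k\,|\mathcal R(v_{K'})|}{|\mathcal R(v_K)|}
\]
is correct, and your support arguments (nonvanishing of $p_v(w)$ iff $v\le w$, vanishing of $p_{s_j}(w_{K'})$, etc.) are fine.

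However, what you have written is not a proof but a reduction: you explicitly leave the displayed identity unverified, labeling it ``the main obstacle.'' That identity \emph{is} the content of the theorem beyond the formal induction. In Drellich's argument its verification occupies the bulk of the work and proceeds by explicit Lie-type-by-Lie-type computation: one computes the restrictions $p_{v_K}(w_K)$ and $p_{v_{K'}}(w_K)$ via Billey's formula applied to concrete reduced words for $w_K$, and separately enumerates the reduced words of $v_K$; neither step is short, and no uniform (type-independent) argument is given. Until you actually carry out that comparison, the induction does not close.
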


\begin{rmk}[cf. Theorem 5.3 in \cite{d}]
The connectedness assumption in the above theorem is not serious, in the following sense. Suppose $J,K\subseteq\Delta$ are two subsets of $\Delta$ such that their corresponding Dynkin diagrams are connected. Suppose, however, that $J \cup K$ has corresponding Dynkin diagram that is not connected. Then $p_{v_{J \cup K}}$ is simply the product of $p_{v_J}$ and $p_{v_K}$, i.e. 
\begin{equation*}
p_{v_{J\cup K}}=p_{v_J}\cdot p_{v_K}. 
\end{equation*}
\end{rmk}

\section{Quadratic relations satisfied by the Peterson Schubert classes $p_{s_i}$}\label{sect:3}

In this section, we derive certain quadratic relations satisfied by the cohomology-degree-$2$ Peterson Schubert classes $p_{s_i}$ 
$(i=1,2,\dots,n)$ by using Monk's formula (Theorem~\ref{thmMonk}), Giambelli's formula (Theorem~\ref{thmGiambelli}), and 
Billey's formula recalled below. We will then show in the next section that these relations are sufficient to determine the equivariant cohomology ring $H^{\ast}_{S}(Pet)$ of the Peterson variety. 

\begin{thm}[Billey's formula, Theorem 4 in \cite{b}] \label{thmBilley} 
Let 
$w\in W$ 
and fix a reduced word decomposition $w=s_{b_1}s_{b_2}\cdots s_{b_m}$ of $w$.  
Set $r(i,w) :=s_{b_1}s_{b_2}\cdots s_{b_{i-1}}(\alpha_{b_i})$.  For an equivariant Schubert class $\sigma_v$ for $v \in W$ we have the following: 
\begin{equation*} 
\sigma_v(w)=\displaystyle\sum_{\substack{reduced \ words \\ v=s_{b_{j_1}}s_{b_{j_2}}\cdots s_{b_{j_\ell}}}}\displaystyle\prod_{i=1}^{\ell} r(j_i,w).
\end{equation*}
\end{thm}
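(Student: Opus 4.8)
The assertion is Billey's classical formula, so a ``proof'' here means reconstructing the standard argument; the plan is to induct on $m=\ell(w)$ by peeling off the last letter of the chosen reduced word $w=s_{b_1}s_{b_2}\cdots s_{b_m}$. Set $w'=s_{b_1}\cdots s_{b_{m-1}}$, so $w=w's_{b_m}$ and $\ell(w')=m-1$, and write $\widetilde{\sigma}_v(w)$ for the right-hand side of the asserted identity computed with this reduced word.

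The induction rests on comparing two recursions. The first is purely combinatorial: a reduced subword of $w$ spelling $v$ either avoids position $m$ --- in which case it is a reduced subword of $w'$ spelling $v$, with weight $\prod_i r(j_i,w)$ unchanged since $r(j,w)=r(j,w')$ for $j\le m-1$ --- or uses position $m$, in which case the preceding letters form a reduced word for $vs_{b_m}$ (which forces $\ell(vs_{b_m})<\ell(v)$) and the last factor is $r(m,w)=w'(\alpha_{b_m})$. Summing yields
\[
\widetilde{\sigma}_v(w)=\widetilde{\sigma}_v(w')+[\,\ell(vs_{b_m})<\ell(v)\,]\cdot w'(\alpha_{b_m})\cdot\widetilde{\sigma}_{vs_{b_m}}(w'),
\]
where $[\,\cdot\,]$ is $1$ or $0$ according as the condition holds. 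The second recursion is the matching one for the genuine localizations: for $\ell(w's_i)=\ell(w')+1$,
\[
\sigma_v(w's_i)=\sigma_v(w')+[\,\ell(vs_i)<\ell(v)\,]\cdot w'(\alpha_i)\cdot\sigma_{vs_i}(w').
\]
Granting this, the induction closes at once: by the inductive hypothesis $\widetilde{\sigma}_v(w')=\sigma_v(w')$ and $\widetilde{\sigma}_{vs_{b_m}}(w')=\sigma_{vs_{b_m}}(w')$, so the two recursions give $\widetilde{\sigma}_v(w)=\sigma_v(w)$; the base case $m=0$ reduces to the observation that the empty word has only the empty subword, whence $\widetilde{\sigma}_v(e)=\sigma_v(e)$.

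The real content, and the step I expect to be the main obstacle, is establishing the right-multiplication recursion for $\sigma_v$. I would obtain it either (i) algebraically, in the nil-Hecke ring of Kostant--Kumar, by expanding the group element $\delta_w=\prod_k(1-\alpha_{b_k}x_{b_k})$ in the $S$-basis $\{x_u\}$ via the commutation rule $x_if=(s_if)x_i+\partial_i(f)$ and reading off the coefficient of $x_v$ as $\sigma_v(w)$; or (ii) geometrically, from the $\mathbb{P}^1$-bundle $G/B\to G/P_i$ and the induced pushforward--pullback operator on $H^*_T(G/B)$, whose action on Schubert classes is standard. Alternatively one can sidestep the recursion altogether and verify directly that $\widetilde{\sigma}_v$ defines a well-defined class in $H^*_T(G/B)$ obeying the uniqueness characterization of $\sigma_v$ (homogeneous of degree $2\ell(v)$, vanishing at every $u\neq v$ with $\ell(u)\le\ell(v)$, and equal to $\sigma_v(v)$ at $v$), the subtle point there being the GKM congruence between adjacent fixed points, which reduces to a rank-two braid computation. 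A pleasant byproduct of the recursive approach is that the independence of Billey's formula from the chosen reduced word for $w$ falls out of the induction for free, rather than needing a separate invariance-under-braid-moves argument.
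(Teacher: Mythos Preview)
The paper does not prove this theorem at all: it is quoted verbatim as ``Theorem 4 in \cite{b}'' and used as a black box in the computations of Section~\ref{sect:3}. There is therefore no proof in the paper to compare your attempt against.

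For what it is worth, your sketch is a faithful outline of the standard argument (essentially Billey's original proof, or the exposition via the nil-Hecke ring as in Kostant--Kumar and Kumar's book): induct on $\ell(w)$ by peeling off the last simple reflection, match the obvious combinatorial recursion on reduced subwords with the right-multiplication recursion $\sigma_v(w's_i)=\sigma_v(w')+[\ell(vs_i)<\ell(v)]\,w'(\alpha_i)\,\sigma_{vs_i}(w')$ for the localizations, and close the induction. Your identification of that recursion as the substantive step is correct, and both routes you propose for it (nil-Hecke expansion of $\delta_w$, or the $\mathbb{P}^1$-bundle push-pull operator) are standard and work. The alternative via the GKM/uniqueness characterization is also viable but, as you note, requires a separate check of well-definedness under braid moves. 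None of this appears in the present paper, which simply cites the result.
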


We begin with some elementary computations involving Peterson Schubert classes. First, from 
Monk's formula (Theorem~\ref{thmMonk}) applied to the case $K=\{\alpha_i\}$ and $v_K=s_i$ we obtain 
\begin{equation} \label{eq:3-1}
p_{s_i}^2=p_{s_i}(s_i)\cdot p_{s_i}+\displaystyle\sum_{j\neq i} c_i^j\cdot p_{v_{\{\alpha_i,\alpha_j\}}} 
\end{equation}
where 
\begin{equation} \label{eq:3-2}
c_i^j=(p_{s_i}(w_{\{\alpha_i,\alpha_j\}})-p_{s_i}(s_i))\cdot \frac {p_{s_i}(w_{\{\alpha_i,\alpha_j\}})}{p_{v_{\{\alpha_i,\alpha_j\}}}(w_{\{\alpha_i,\alpha_j\}})}. 
\end{equation}
More specifically, since Theorem~\ref{thmBilley} implies that  $\sigma_{s_i}(s_i)=\alpha_i$, from \eqref{eq:2-1} we conclude 
\begin{equation} \label{eq:3-3}
p_{s_i}(s_i)=t. 
\end{equation}

We record the following. 
\begin{lem} \label{lemcommute}
In~\eqref{eq:3-1}, if $s_i$ and $s_j$ commute, then $c_i^j=0$.
\end{lem}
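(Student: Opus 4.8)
The plan is to show that $c_i^j = 0$ by analyzing the factor $p_{s_i}(w_{\{\alpha_i,\alpha_j\}}) - p_{s_i}(s_i)$ appearing in the formula \eqref{eq:3-2} for $c_i^j$, and arguing that it vanishes whenever $s_i$ and $s_j$ commute. Recall that $p_{s_i}(s_i) = t$ by \eqref{eq:3-3}; so it suffices to prove that $p_{s_i}(w_{\{\alpha_i,\alpha_j\}}) = t$ as well. Since $s_i$ and $s_j$ commute, the parabolic subgroup $W_{\{\alpha_i,\alpha_j\}}$ is isomorphic to $\Z/2 \times \Z/2$, and its longest element is $w_{\{\alpha_i,\alpha_j\}} = s_i s_j = s_j s_i$, which has reduced word expressions $s_i s_j$ and $s_j s_i$ (and only these).

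I would compute $p_{s_i}(w_{\{\alpha_i,\alpha_j\}})$ using Billey's formula (Theorem~\ref{thmBilley}) together with the restriction $\pi(\alpha_k) = t$ from \eqref{eq:2-1}. Fix the reduced word $w_{\{\alpha_i,\alpha_j\}} = s_i s_j$, so $b_1 = i$, $b_2 = j$. Then $r(1,w) = \alpha_i$ and $r(2,w) = s_i(\alpha_j) = \alpha_j - \langle \alpha_j, \alpha_i^\vee\rangle \alpha_i = \alpha_j$, the last equality because $s_i$ and $s_j$ commute forces $\langle \alpha_j, \alpha_i^\vee \rangle = 0$. Billey's formula for $\sigma_{s_i}(w_{\{\alpha_i,\alpha_j\}})$ sums over reduced subwords of $s_i s_j$ equal to $s_i$; the only such subword is the one picking out the first letter, contributing $r(1,w) = \alpha_i$. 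Hence $\sigma_{s_i}(w_{\{\alpha_i,\alpha_j\}}) = \alpha_i$, and applying $\pi$ gives $p_{s_i}(w_{\{\alpha_i,\alpha_j\}}) = \pi(\alpha_i) = t$. Therefore $p_{s_i}(w_{\{\alpha_i,\alpha_j\}}) - p_{s_i}(s_i) = t - t = 0$, and so $c_i^j = 0$ by \eqref{eq:3-2}.

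The only point requiring care is making sure the enumeration of reduced subwords in Billey's formula is complete and that no additional terms arise — but since $w_{\{\alpha_i,\alpha_j\}} = s_i s_j$ has length $2$ and $s_i$ has length $1$, the reduced subwords of the fixed word $s_i s_j$ that spell $s_i$ are exactly the single-letter subwords equal to $s_i$, namely just the first letter. (One should double-check the convention in Theorem~\ref{thmBilley} that subwords are selected from the fixed reduced word, not from arbitrary reduced words of $w$; this is standard.) I do not anticipate a genuine obstacle here; the argument is a short direct computation once one observes that commutativity of $s_i,s_j$ kills the relevant Cartan pairing and collapses the Billey sum to a single term.
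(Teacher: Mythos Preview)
Your proof is correct and follows essentially the same approach as the paper: identify $w_{\{\alpha_i,\alpha_j\}}=s_is_j$, apply Billey's formula to get $\sigma_{s_i}(s_is_j)=\alpha_i$, restrict via $\pi$ to obtain $p_{s_i}(w_{\{\alpha_i,\alpha_j\}})=t$, and conclude $c_i^j=0$ from \eqref{eq:3-2} and \eqref{eq:3-3}. Your extra remarks (computing $r(2,w)$ and noting the vanishing Cartan integer) are not needed but do no harm.
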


\begin{proof}
Since 
$s_i$ and $s_j$ commute, we have $w_{\{\alpha_i,\alpha_j\}}=s_is_j$. Moreover  
from Theorem~\ref{thmBilley} we can compute 
\[
\sigma_{s_i}(w_{\{\alpha_i,\alpha_j\}})=\sigma_{s_i}(s_is_j)=\alpha_i. 
\]
From~\eqref{eq:2-1} we get $p_{s_i}(w_{\{\alpha_i,\alpha_j\}})=t$.  Then the equations~\eqref{eq:3-2}, \eqref{eq:3-3} imply $c_i^j=0$ as desired. 
\end{proof}

In the case when $s_i$ and $s_j$ do not commute, the Dynkin diagram corresponding to the subset 
$K=\{\alpha_i,\alpha_j\}$ is connected, so Giambelli's formula (Theorem~\ref{thmGiambelli}) yields
\begin{equation} \label{eq:3-4}
p_{v_{\{\alpha_i,\alpha_j\}}}=\frac{1}{2}p_{s_i}p_{s_j}.
\end{equation}

In this case, the coefficient appearing in~\eqref{eq:3-1} can be expressed in terms of the Cartan matrix. 

\begin{lem}\label{lemcartan}
In~\eqref{eq:3-1}, if $s_i$ and $s_j$ do not commute, then  
\begin{equation*}
c_i^j=-\langle\alpha_i,\alpha_j\rangle
\end{equation*}
where $\langle\alpha_i,\alpha_j\rangle$ denotes the \emph{Cartan integer}. 
\end{lem}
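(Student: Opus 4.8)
The plan is to compute the coefficient $c_i^j$ directly from its explicit formula~\eqref{eq:3-2} using Billey's formula (Theorem~\ref{thmBilley}) for the required restrictions of Schubert classes. Since $s_i$ and $s_j$ do not commute, the parabolic subgroup $W_{\{\alpha_i,\alpha_j\}}$ has rank $2$ and is of type $A_2$, $B_2$, or $G_2$; in each case the longest element $w_{\{\alpha_i,\alpha_j\}}$ has a known length and known reduced-word expressions, namely length $3$ ($=s_is_js_i=s_js_is_j$) in type $A_2$, length $4$ in type $B_2$, and length $6$ in type $G_2$. The three quantities I need are $p_{s_i}(w_{\{\alpha_i,\alpha_j\}})$, $p_{s_i}(s_i)$ (already known to be $t$ by~\eqref{eq:3-3}), and the ratio $p_{s_i}(w_{\{\alpha_i,\alpha_j\}})/p_{v_{\{\alpha_i,\alpha_j\}}}(w_{\{\alpha_i,\alpha_j\}})$.

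First I would fix a reduced word for $w_{\{\alpha_i,\alpha_j\}}$ (for instance, starting with $s_i$) and apply Billey's formula to compute $\sigma_{s_i}(w_{\{\alpha_i,\alpha_j\}})$ as a sum over the positions in that word at which the letter $s_i$ appears; each such position contributes a root $r(k,w)$ of the form $s_i s_j s_i \cdots (\alpha_i)$. These roots are all of the form $\alpha_i + (\text{nonnegative integer})\,\alpha_j$ (with the integer being $0$ for the first occurrence, and a value like $-\langle \alpha_i, \alpha_j\rangle$ or similar for later occurrences), and after applying $\pi$ via~\eqref{eq:2-1} each such root specializes to $t$ times an integer, so $p_{s_i}(w_{\{\alpha_i,\alpha_j\}})$ becomes an explicit integer multiple of $t$; the net coefficient of $t$ works out, in each of the three rank-two types, to a number related to $1 - \langle\alpha_i,\alpha_j\rangle$. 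Similarly, using~\eqref{eq:3-4} together with Billey's formula applied to both $\sigma_{s_i}$ and $\sigma_{s_j}$ at $w_{\{\alpha_i,\alpha_j\}}$, I would compute $p_{v_{\{\alpha_i,\alpha_j\}}}(w_{\{\alpha_i,\alpha_j\}}) = \tfrac12 p_{s_i}(w_{\{\alpha_i,\alpha_j\}}) p_{s_j}(w_{\{\alpha_i,\alpha_j\}})$ as an explicit multiple of $t^2$. Plugging all of this into~\eqref{eq:3-2} and simplifying, the $t$'s cancel and one is left with a rational number; the claim is that this simplifies to $-\langle\alpha_i,\alpha_j\rangle$ in every case.

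Rather than a fully case-free argument, I expect the cleanest write-up is to handle the three rank-two types in turn (or, equivalently, to parametrize by the off-diagonal Cartan integers), since the reduced words and hence the Billey sums differ. The main obstacle — really the only subtlety — is the bookkeeping of the Billey-formula roots $r(k,w)$: one must track carefully which prefixes $s_{b_1}\cdots s_{b_{k-1}}$ act on which simple root, and correctly expand the resulting elements of the root lattice in the basis $\{\alpha_i,\alpha_j\}$ before specializing via $\alpha_i,\alpha_j\mapsto t$. Once those roots are in hand, the rest is elementary algebra; I would double-check the type-$A_2$ case (where everything is small) as a sanity check, confirming there $\langle\alpha_i,\alpha_j\rangle=-1$ and $c_i^j=1$, matching the known type-$A$ computation of Fukukawa--Harada--Masuda.
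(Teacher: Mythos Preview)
Your proposal is correct and follows essentially the same route as the paper: reduce to the three rank-two cases $A_2$, $B_2$, $G_2$, compute $\sigma_{s_i}(w_{\{\alpha_i,\alpha_j\}})$ and $\sigma_{s_j}(w_{\{\alpha_i,\alpha_j\}})$ via Billey's formula and the $s_j$-action on simple roots, specialize via $\alpha_i,\alpha_j\mapsto t$, and simplify. The paper streamlines the algebra slightly by first using~\eqref{eq:3-4} to rewrite~\eqref{eq:3-2} as $c_i^j = 2\bigl(p_{s_i}(w_{\{\alpha_i,\alpha_j\}})-t\bigr)\big/ p_{s_j}(w_{\{\alpha_i,\alpha_j\}})$, so only $p_{s_i}$ and $p_{s_j}$ at $w_{\{\alpha_i,\alpha_j\}}$ need to be computed; your remark that the individual Billey roots $r(k,w)$ have the form $\alpha_i+(\text{nonnegative integer})\alpha_j$ is not quite accurate (they are arbitrary positive roots in the rank-two subsystem), but this does not affect the argument.
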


\begin{proof}
From \eqref{eq:3-2}, \eqref{eq:3-3}, \eqref{eq:3-4} we can compute 
\begin{equation} \label{eq:eq:3-5}
c_i^j=\frac {2(p_{s_i}(w_{\{\alpha_i,\alpha_j\}})-t)}{p_{s_j}(w_{\{\alpha_i,\alpha_j\}})}
\end{equation}
so it suffices to compute 
$p_{s_i}(w_{\{\alpha_i,\alpha_j\}})$ and 
$p_{s_j}(w_{\{\alpha_i,\alpha_j\}})$. 
In what follows we use the notation 
\[
a_{ij}:=\langle \alpha_i,\alpha_j\rangle \ \ (i\not=j), \quad a:=a_{ij}a_{ji}. 
\]
With this notation in place, note that by definition of the Cartan integers we have that the action of the simple reflections $s_j$ on the simple roots $\alpha_j$ may be expressed as 
\begin{equation} \label{eq:3-6}
s_j(\alpha_i)=\begin{cases} \alpha_i-a_{ij}\alpha_j \quad&(i\not=j),\\
                              -\alpha_i \quad&(i=j).
                              \end{cases}
\end{equation}

In order to prove the lemma, we consider each of the possible cases. 
                            
(i) In the case when the Dynkin diagram corresponding to $\{\alpha_i, \alpha_j\}$ is of the form 
\begin{picture}(35,10)                   
                       \put(5,5){\circle{5}}    
                       \put(7.5,5){\line(1,0){20}}
                       \put(30,5){\circle{5}}
                       
                       \put(3,-7){$i$} 
                       \put(27,-7){$j$}                                                                     
\end{picture}
the order of $s_is_j$ is $3$ so we have 
\[
w_{\{\alpha_i,\alpha_j\}}=s_is_js_i=s_js_is_j.
\]
Using 
Theorem~\ref{thmBilley} and~\eqref{eq:3-6} in this case we can compute that
\begin{align*}
&\sigma_{s_i}(w_{\{\alpha_i,\alpha_j\}})=\sigma_{s_i}(s_is_js_i)=\alpha_i+s_is_j(\alpha_i)=a\alpha_i-a_{ij}\alpha_j,\\
&\sigma_{s_j}(w_{\{\alpha_i,\alpha_j\}})=\sigma_{s_j}(s_js_is_j)=\alpha_j+s_js_i(\alpha_j)=a\alpha_j-a_{ji}\alpha_i. 
\end{align*}
Then~\eqref{eq:2-1} implies 
\begin{equation*}
p_{s_i}(w_{\{\alpha_i,\alpha_j\}})=(a-a_{ij})t,\quad p_{s_j}(w_{\{\alpha_i,\alpha_j\}})=(a-a_{ji})t.
\end{equation*}
Finally~\eqref{eq:eq:3-5} yields 
\begin{equation} \label{eq:3-7}
c_i^j= \frac{2(a-a_{ij}-1)}{(a-a_{ji})}
\end{equation}
and substituting $a=a_{ij}a_{ji}$, $a_{ij}=-1$ we obtain $c_i^j=-a_{ij}$ as desired.  

(ii) In the case 
\begin{picture}(35,10)                   
                       \put(5,5){\circle{5}}    
                       \put(7.5,6){\line(1,0){20}}
                       \put(7.5,4){\line(1,0){20}}
                       \put(30,5){\circle{5}}
                       
                       \put(3,-7){$i$} 
                       \put(27,-7){$j$}                                                                    
\end{picture}
the order of $s_is_j$ is $4$ so we have 
\[
w_{\{\alpha_i,\alpha_j\}}=s_is_js_is_j=s_js_is_js_i.
\]
Using the above together with 
Theorem~\ref{thmBilley} and~\eqref{eq:3-6} we may compute
\begin{align*}
&\sigma_{s_i}(w_{\{\alpha_i,\alpha_j\}})=\sigma_{s_i}(s_is_js_is_j)=\alpha_i+s_is_j(\alpha_i)=a\alpha_i-a_{ij}\alpha_j,\\
&\sigma_{s_j}(w_{\{\alpha_i,\alpha_j\}})=\sigma_{s_j}(s_js_is_js_i)=\alpha_j+s_js_i(\alpha_j)=a\alpha_j-a_{ji}\alpha_i  
\end{align*}
which is the same as case (i) above. Thus~\eqref{eq:3-7}
also holds in this case and since $a=a_{ij}a_{ji}=2$ we obtain $c_i^j=-a_{ij}$ as required. 

(iii) Finally, in the case 
\begin{picture}(35,10)                   
                       \put(5,5){\circle{5}}  
                       \put(7,6.5){\line(1,0){20}}
                       \put(7.5,5){\line(1,0){20}}
                       \put(7,3.5){\line(1,0){20}}
                       \put(30,5){\circle{5}} 
                                                                   
                       \put(3,-7){$i$} 
                       \put(27,-7){$j$}
                                                                    
\end{picture}
the element $s_is_j$ has order $6$ and thus 
\[
w_{\{\alpha_i,\alpha_j\}}=s_is_js_is_js_is_j=s_js_is_js_is_js_i.
\]
In this case we have $a=3$ so Theorem~\ref{thmBilley} and~\eqref{eq:3-6} yield that 
\begin{align*}
\sigma_{s_i}(w_{\{\alpha_i,\alpha_j\}})&=\sigma_{s_i}(s_is_js_is_js_is_j)
=\alpha_i+s_is_j(\alpha_i)+(s_is_j)^2(\alpha_i)=4\alpha_i-2a_{ij}\alpha_j,\\
\sigma_{s_j}(w_{\{\alpha_i,\alpha_j\}})&=\sigma_{s_j}(s_js_is_js_is_js_i)
=\alpha_j+s_js_i(\alpha_j)+(s_js_i)^2(\alpha_j)=4\alpha_j-2a_{ji}\alpha_i.
\end{align*}
Then from~\eqref{eq:2-1} we compute 
\begin{equation*}
p_{s_i}(w_{\{\alpha_i,\alpha_j\}})=(4-2a_{ij})t,\quad 
p_{s_j}(w_{\{\alpha_i,\alpha_j\}})=(4-2a_{ji})t.
\end{equation*}
Equation~\eqref{eq:eq:3-5} then implies 
\[
c_i^j=\frac{2(3-2a_{ij})}{4-2a_{ji}}
\]
and finally using that $a_{ij}a_{ji}=3$ we get that $c_i^j=-a_{ij}$ as desired.

This completes the proof of the lemma.
\end{proof}

From the above considerations we obtain the following proposition. 
\begin{prop} \label{Propqua} 
In the equivariant cohomology ring $H^*_S(Pet)$ of the Peterson variety, the following quadratic relations are satisfied:  
\begin{equation*}
\displaystyle\sum_{j=1}^{n}\langle\alpha_i,\alpha_j\rangle p_{s_i}p_{s_j}-2tp_{s_i}=0 \ \ (1\leq i\leq n).
\end{equation*}
\end{prop}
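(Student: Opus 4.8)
The plan is to derive the claimed relation directly by specializing Monk's formula (Theorem~\ref{thmMonk}) in the case $K = \{\alpha_i\}$, which is exactly equation~\eqref{eq:3-1}, and then substituting the coefficient computations we have already established. Recall that~\eqref{eq:3-1} reads
\[
p_{s_i}^2 = p_{s_i}(s_i) \cdot p_{s_i} + \sum_{j \neq i} c_i^j \cdot p_{v_{\{\alpha_i,\alpha_j\}}},
\]
and that~\eqref{eq:3-3} gives $p_{s_i}(s_i) = t$. So the task reduces to rewriting the sum on the right-hand side using what we know about $c_i^j$ and $p_{v_{\{\alpha_i,\alpha_j\}}}$.

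The key case split is whether $s_i$ and $s_j$ commute. First I would invoke Lemma~\ref{lemcommute}: when $s_i$ and $s_j$ commute, $c_i^j = 0$, so those terms drop out of the sum and we may restrict attention to the indices $j$ for which $s_i$ and $s_j$ do not commute. For such $j$, Giambelli's formula in the form~\eqref{eq:3-4} gives $p_{v_{\{\alpha_i,\alpha_j\}}} = \tfrac{1}{2} p_{s_i} p_{s_j}$, and Lemma~\ref{lemcartan} gives $c_i^j = -\langle \alpha_i, \alpha_j\rangle$. Substituting both into~\eqref{eq:3-1} yields
\[
p_{s_i}^2 = t\, p_{s_i} + \sum_{\substack{j \neq i \\ s_i s_j \neq s_j s_i}} \bigl(-\langle \alpha_i, \alpha_j\rangle\bigr) \cdot \tfrac{1}{2} p_{s_i} p_{s_j}
= t\, p_{s_i} - \tfrac{1}{2}\sum_{\substack{j \neq i \\ s_i s_j \neq s_j s_i}} \langle \alpha_i, \alpha_j\rangle\, p_{s_i} p_{s_j}.
\]

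Finally I would reconcile the index set of the sum with the full sum $\sum_{j=1}^n$ appearing in the proposition. When $s_i$ and $s_j$ commute (and $j \neq i$), the Cartan integer $\langle \alpha_i, \alpha_j\rangle$ is zero, so those terms contribute nothing and may be freely added back in; and the $j = i$ term $\langle \alpha_i, \alpha_i\rangle p_{s_i}^2 = 2 p_{s_i}^2$ can be moved to the left-hand side. Thus, multiplying through by $2$ and rearranging,
\[
2 p_{s_i}^2 + \sum_{j \neq i} \langle \alpha_i, \alpha_j\rangle\, p_{s_i} p_{s_j} - 2t\, p_{s_i} = 0,
\]
which, since $\langle \alpha_i, \alpha_i \rangle = 2$, is precisely $\sum_{j=1}^n \langle \alpha_i, \alpha_j\rangle\, p_{s_i} p_{s_j} - 2t\, p_{s_i} = 0$. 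I do not anticipate a genuine obstacle here: all the substantive work has been done in Lemmas~\ref{lemcommute} and~\ref{lemcartan} and in the Giambelli specialization~\eqref{eq:3-4}; the only points requiring care are the bookkeeping of the index set (using vanishing of $\langle\alpha_i,\alpha_j\rangle$ for commuting reflections to pass from a restricted sum to the full sum) and the normalization factor of $2$ coming from $\langle\alpha_i,\alpha_i\rangle = 2$.
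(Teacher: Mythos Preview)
Your proposal is correct and follows essentially the same argument as the paper: specialize Monk's formula to $K=\{\alpha_i\}$, use Lemma~\ref{lemcommute} to discard the commuting terms, use \eqref{eq:3-4} and Lemma~\ref{lemcartan} for the non-commuting terms, then absorb the commuting indices back via $\langle\alpha_i,\alpha_j\rangle=0$ and use $\langle\alpha_i,\alpha_i\rangle=2$ to rewrite. The only cosmetic difference is that the paper phrases the commuting case as ``the conclusion of Lemma~\ref{lemcartan} also holds'' (since both sides are zero), whereas you first drop those terms and later reinsert them; the content is identical.
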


\begin{proof}
If $s_i$ and $s_j$ commute then $\langle \alpha_i,\alpha_j\rangle=0$, so by 
Lemma~\ref{lemcommute} the conclusion of 
Lemma~\ref{lemcartan} holds in this case. From this and \eqref{eq:3-4} we see that 
\eqref{eq:3-1} can be expressed as 
\begin{equation*}
p_{s_i}^2=t\cdot p_{s_i}-\frac{1}{2}\sum_{j\neq i} \langle\alpha_i,\alpha_j\rangle p_{s_i}p_{s_j}. 
\end{equation*}
Since $\langle \alpha_i,\alpha_i\rangle=2$
for any $i$, the above equation can be re-written to be of the form 
given in the statement of the proposition. 
\end{proof}

\section{The main theorem} \label{sect:4}

Let $(\langle\alpha_i,\alpha_j\rangle)_{1\leq i,j\leq n}$ be the Cartan matrix associated to a rank $n$ semisimple Lie algebra $\mathfrak g$. Using the coefficients in the Cartan matrix, we define an ideal $J$ in the polynomial ring 
$\mathbb{C}[x_1,\dots,x_n,t]$ as follows: 
\begin{align*}
J:=\left (\displaystyle\sum_{j=1}^{n}\langle\alpha_i,\alpha_j\rangle x_ix_j-2tx_i \mid 1\leq i\leq n \right).
\end{align*}
From Proposition~\ref{Propgene} and Proposition~\ref{Propqua} it then follows that the map sending $x_i$ to $p_{s_i}$ defines a  surjective $\mathbb{C}[t]$-algebra homomorphism 
\begin{equation}\label{eq:4-1}
\varphi: \mathbb{C}[x_1,\dots,x_n,t]/J\twoheadrightarrow H^*_S(Pet). 
\end{equation}
Here $H^*(BS)=\mathbb{C}[t]$ and $Pet$ denotes the Peterson variety 
associated to the Lie algebra $\mathfrak{g}$. 
Since 
$H^{odd}(Pet)=0$, as a $H^*(BS)$-module we have $H^*_S(Pet) \cong H^*(BS)\otimes H^*(Pet)$. 
Defining the ideal $\check{J}$ as 
\begin{equation} \label{eq:4-2}
\check{J}=\left( \displaystyle\sum_{j=1}^{n}\langle\alpha_i,\alpha_j\rangle x_ix_j \mid 1\leq i\leq n \right) 
\end{equation}
we then also have a surjective ring homomorphism 
\begin{equation} \label{eq:4-3}
\check\varphi\colon \mathbb{C}[x_1,\dots,x_n]/\check{J}\twoheadrightarrow H^*(Pet). 
\end{equation}
The following is the main theorem of this paper. 

\begin{thm}\label{mainthm}
The maps $\varphi$ and $\check{\varphi}$ of \eqref{eq:4-1} and \eqref{eq:4-3} are both isomorphisms. 
\end{thm}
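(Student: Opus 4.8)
The plan is to show that $\varphi$ is an isomorphism by a Hilbert series / regular sequence argument, and then deduce the statement for $\check{\varphi}$ by setting $t=0$. First I would observe that since $\varphi$ in~\eqref{eq:4-1} is a surjective graded $\mathbb{C}[t]$-algebra homomorphism, it is an isomorphism if and only if the source and target have the same Hilbert series. By~\eqref{eq:2-3} we know $F(H^*_S(Pet),s) = (1+s^2)^n/(1-s^2)$, so it suffices to prove that
\begin{equation*}
F\bigl(\mathbb{C}[x_1,\dots,x_n,t]/J,\,s\bigr) = \frac{(1+s^2)^n}{1-s^2},
\end{equation*}
where each $x_i$ and $t$ has degree $2$ (equivalently degree $1$ in the variable $s$ after the usual reindexing). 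The key point is that the $n$ generators $f_i := \sum_{j} \langle\alpha_i,\alpha_j\rangle x_ix_j - 2tx_i$ of $J$, together with $t$, should form a regular sequence of length $n+1$ in the polynomial ring $\mathbb{C}[x_1,\dots,x_n,t]$ in $n+1$ variables. If that is established, then $\mathbb{C}[x_1,\dots,x_n,t]/(f_1,\dots,f_n,t) \cong \mathbb{C}[x_1,\dots,x_n]/\check{J}$ is a complete intersection of $n$ quadrics in $n$ variables, hence has Hilbert series $(1+s^2)^n$ (in the $s$-grading), and adjoining back the free variable $t$ of degree $2$ multiplies this by $1/(1-s^2)$, giving exactly the desired series.

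So the heart of the argument is to prove that $t, f_1, \dots, f_n$ is a regular sequence, or equivalently (since these are homogeneous in a polynomial ring, hence Cohen--Macaulay) that the quotient $\mathbb{C}[x_1,\dots,x_n,t]/(t,f_1,\dots,f_n) = \mathbb{C}[x_1,\dots,x_n]/\check{J}$ is finite-dimensional over $\mathbb{C}$, i.e. that the only common zero of the $n$ quadrics $\bar f_i = x_i \sum_j \langle\alpha_i,\alpha_j\rangle x_j$ in $\mathbb{C}^n$ is the origin. Writing $C = (\langle\alpha_i,\alpha_j\rangle)$ for the Cartan matrix and $x = (x_1,\dots,x_n)^{\mathsf T}$, the equations $\bar f_i = 0$ say that $x_i \cdot (Cx)_i = 0$ for every $i$. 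The plan is: for a common zero $x$, let $I = \{i : x_i \neq 0\}$; then $(Cx)_i = 0$ for all $i \in I$, which means the principal submatrix $C_I$ of $C$ (indexed by $I$) kills the nonzero vector $(x_i)_{i\in I}$, so $\det C_I = 0$. But every principal submatrix of a Cartan matrix is itself (up to the obvious block decomposition along connected components of the corresponding sub-Dynkin-diagram) a Cartan matrix of a semisimple Lie algebra, hence has positive determinant. This forces $I = \emptyset$, i.e. $x = 0$.

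The main obstacle I anticipate is the linear-algebra fact that principal submatrices of a Cartan matrix are nonsingular; this is classical (it follows, for instance, from the fact that a sub-Dynkin-diagram of a finite-type Dynkin diagram is again of finite type, together with the positive-definiteness of the symmetrized Cartan matrix in finite type), but I would state it cleanly as a lemma and either cite Humphreys~\cite{hu80} or give the short self-contained argument via positive-definiteness. Everything else is routine: once the common-zero locus is $\{0\}$ we get finite length, hence the Cohen--Macaulay / regular sequence conclusion, hence the Hilbert series computation, hence $\varphi$ is an isomorphism. Finally, to handle $\check{\varphi}$: we have the commutative square relating $\varphi$ and $\check{\varphi}$ obtained by reducing mod $t$ on the source and applying $H^*_S(Pet) \to H^*_S(Pet)/t\cdot H^*_S(Pet) \cong H^*(Pet)$ on the target (using $H^{odd}(Pet) = 0$, so $H^*_S(Pet)$ is free over $\mathbb{C}[t]$ and $t$ is a non-zero-divisor); since $\varphi$ is an isomorphism and the vertical maps are surjective with the expected kernels, $\check{\varphi}$ is an isomorphism as well. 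Alternatively one can simply note $\mathbb{C}[x_1,\dots,x_n,t]/(J,t) = \mathbb{C}[x_1,\dots,x_n]/\check{J}$ and compare Hilbert series with $F(H^*(Pet),s) = (1+s^2)^n$ directly.
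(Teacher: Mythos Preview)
Your proposal is correct and follows essentially the same route as the paper: the heart of both arguments is the observation that the only common zero of the quadrics $x_i\sum_j\langle\alpha_i,\alpha_j\rangle x_j$ is the origin, proved via nonsingularity of principal submatrices of the Cartan matrix, which yields the needed regular sequence and Hilbert series computation. The only organizational difference is the direction of the implication: the paper first proves $\check{\varphi}$ is an isomorphism and deduces the statement for $\varphi$, whereas you go the other way; both reductions use the same fact that $f_1,\dots,f_n,t$ (in either order) is regular.
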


In order to prove Theorem~\ref{mainthm} we use the theory of 
regular sequences. For reference we briefly recall the definition and 
a key property of regular sequences (cf. \cite{f-h-m}).

\begin{defn}\label{def:regular}
Let $R$ be a graded commutative algebra over $\QC$ and let $R_+$
denote the positive-degree elements in $R$.  Then a homogeneous
sequence $\f_1,\dots,\f_r \in R_+$ is a \emph{regular sequence} if 
$\f_k$ is a non-zero-divisor in the quotient ring $R/(\f_1,\dots,\f_{k-1})$ for every $1\le k\le r$.  This is equivalent to saying that $\f_1,\dots,\f_r$ is algebraically independent over $\QC$ and $R$ is a free $\QC[\f_1,\dots,\f_r]$-module.  
\end{defn}
It is a well-known fact (see for instance \cite[p.35]{stan96}) that 
a homogeneous sequence $\f_1,\dots,\f_r \in R_+$ 
is a regular sequence if and only if 
\begin{equation} \label{eq:regseq}
F(R/(\f_1,\dots,\f_r),s)=F(R,s)\prod_{k=1}^r(1-s^{\deg{\f_k}})
\end{equation}
where $F(R/(\f_1,\dots,\f_r),s)$ and $F(R,s)$ denote the Hilbert series of the graded rings $R/(\f_1,\dots,\f_r)$ and $R$, respectively.

The following proposition gives a convenient characterization of regular sequences. 

\begin{prop}\cite[Proposition 5.1]{f-h-m} \label{prop:zeroset} 
A sequence of positive-degree
  homogeneous elements $\f_1,\dots,\f_{\q}$ in the polynomial ring
  $\mathbb{C}[z_1,\dots,z_{\q}]$ is a regular sequence if and only if the
  solution set in $\mathbb{C}^{\q}$ of the equations $\f_1=0,\dots,\f_{\q}=0$
  consists only of the origin $\{0\}$. 
\end{prop}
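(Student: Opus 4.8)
The plan is to reduce the statement to standard commutative-algebra facts about Krull dimension and the Cohen--Macaulay property of the polynomial ring, exploiting the crucial numerical coincidence that the number of equations equals the number $\q$ of variables (equivalently, the Krull dimension of $R=\mathbb{C}[z_1,\dots,z_\q]$). Throughout I write $\mathfrak{m}=(z_1,\dots,z_\q)$ for the irrelevant maximal ideal and note that each $\theta_k$, being homogeneous of positive degree, lies in $\mathfrak{m}$; in particular no $\theta_k$ is a unit, since in a graded $\mathbb{C}$-algebra the only units lie in degree $0$.

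For the direction asserting that vanishing only at the origin forces a regular sequence, I would first invoke the Nullstellensatz: the hypothesis that the common zero locus of $\theta_1,\dots,\theta_\q$ in $\mathbb{C}^\q$ is $\{0\}$ gives $\sqrt{(\theta_1,\dots,\theta_\q)}=\mathfrak{m}$, so the quotient $R/(\theta_1,\dots,\theta_\q)$ has Krull dimension $0$. Since $\dim R=\q$ and the defining ideal is generated by exactly $\q$ elements, the sequence $\theta_1,\dots,\theta_\q$ is a homogeneous system of parameters for the graded ring $R$. The polynomial ring $R$ is Cohen--Macaulay, and in a (graded) Cohen--Macaulay ring every homogeneous system of parameters is a regular sequence; this is the key structural input and yields the claim.

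For the converse, suppose $\theta_1,\dots,\theta_\q$ is a regular sequence in the sense of Definition~\ref{def:regular}. Since at each step the image of $\theta_k$ is a non-zero-divisor and a non-unit, passing to the quotient drops the Krull dimension by exactly one (the upper bound because a non-zero-divisor avoids all minimal primes, the lower bound by Krull's principal ideal theorem), so $\dim R/(\theta_1,\dots,\theta_\q)=\q-\q=0$. A finitely generated graded $\mathbb{C}$-algebra of Krull dimension $0$ is finite-dimensional over $\mathbb{C}$, hence the affine variety $V$ cut out by $\theta_1,\dots,\theta_\q$ is a finite set of points. But the homogeneity of the $\theta_k$ makes $V$ a cone, i.e. invariant under the scaling $z\mapsto\lambda z$ for every $\lambda\in\mathbb{C}^*$; the only finite cone is $\{0\}$, so $V=\{0\}$ as desired. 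Alternatively, the finiteness of $V$ can be read off directly from the Hilbert-series identity \eqref{eq:regseq}, since $F(R,s)=1/(1-s)^\q$ makes $F(R/(\theta_1,\dots,\theta_\q),s)$ a polynomial in $s$.

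The main obstacle is not any single computation but rather marshalling the right dimension-theoretic inputs at the correct level of generality: specifically, the facts that a non-zero-divisor non-unit lowers Krull dimension by exactly one, and that $R$ is Cohen--Macaulay so that systems of parameters coincide with regular sequences. These are standard (see for instance Matsumura or Bruns--Herzog) and can simply be cited; the only genuinely problem-specific observation is the elementary cone argument that upgrades a zero-dimensional (finite) vanishing locus to the single point $\{0\}$, which crucially uses homogeneity of the $\theta_k$.
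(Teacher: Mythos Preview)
Your argument is correct and is the standard commutative-algebra route to this fact: Nullstellensatz plus the Cohen--Macaulay property of the polynomial ring for one direction, and a Krull-dimension (or Hilbert-series) count combined with the cone observation for the other. One small caution: the assertion that a non-zero-divisor non-unit drops Krull dimension by exactly one is not automatic in an arbitrary Noetherian ring after the first step, since the successive quotients need not be equidimensional; the cleanest way to make your converse airtight is exactly the alternative you mention, namely to read off directly from \eqref{eq:regseq} that $F(R/(\theta_1,\dots,\theta_\q),s)=\prod_k\frac{1-s^{\deg\theta_k}}{1-s}$ is a polynomial, hence the quotient is finite-dimensional over $\mathbb{C}$.

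As for comparison with the paper: there is nothing to compare, because the paper does not prove Proposition~\ref{prop:zeroset}. It is quoted verbatim from \cite[Proposition~5.1]{f-h-m} and simply invoked as a black box in the proof of Theorem~\ref{mainthm}. So your write-up supplies a proof where the present paper gives none.
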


We can now prove our main theorem. 

\begin{proof}[Proof of Theorem~\ref{mainthm}]
We first claim that if $\check{\varphi}$ is an isomorphism then it follows that $\varphi$ is an isomorphism.  To see this, suppose that $\check{\varphi}$ is an isomorphism. Then the sequence 
\[
\begin{split}
\f_i:&=\displaystyle\sum_{j=1}^{n}\langle\alpha_i,\alpha_j\rangle x_ix_j-2tx_i \quad \text{for $1\le i\le n$},\\
\f_{n+1}:&=t
\end{split}
\]
in $\QC[x_1,\dots,x_n,t]$ is regular, where $deg(x_i)=deg(t)=2$. Indeed, 
\[
\begin{split}
&F(\QC[x_1,\dots,x_n,t]/(\f_1,\dots,\f_n,\f_{n+1}),s)\\
=&F(\QC[x_1,\dots,x_n]/\check{J},s)\\
=&(1+s^2)^n\\
=&\frac{1}{(1-s^2)^{n+1}}\cdot(1-s^{4})^n(1-s^{2})\\
=&F(\QC[x_1,\dots,x_n,t],s)\prod_{i=1}^{n+1}(1-s^{\deg{\f_i}})
\end{split}
\]
so this follows from~\eqref{eq:regseq}. Note that  
a subsequence $\f_1,\dots,\f_n$ of a regular sequence $\f_1,\dots,\f_{n+1}$ is again a regular sequence, so 
from~\eqref{eq:regseq} and~\eqref{eq:2-3} we obtain 
\[
\begin{split}
F(\QC[x_1,\dots,x_n,t]/J,s)&=F(\QC[x_1,\dots,x_n,t]/(\f_1,\dots,\f_n),s)\\
&=\frac{1}{(1-s^2)^{n+1}}\prod_{i=1}^n(1-s^{\deg \f_i})\\
&=\frac{(1+s^2)^n}{1-s^2}\\
&=F(H^*_S(Pet),s)
\end{split}
\]
from which it follows that $\varphi$ is an isomorphism. 

Thus it suffices to check that $\check{\varphi}$ is an isomorphism. 
We already know that $\check{\varphi}$ is surjective and from equation~\eqref{eq:2-3} we know that $F(H^*(Pet),s)=(1+s^2)^n$. 
Thus in order to show that $\check{\varphi}$ is injective it suffices to show that 
\begin{equation} \label{eq:4-4}
F(\mathbb{C}[x_1,\dots,x_n]/\check{J},s)=(1+s^2)^n. 
\end{equation}
Note that by \eqref{eq:regseq}, the equality \eqref{eq:4-4} is equivalent to the statement that $\sum_{j=1}^{n}\langle\alpha_i,\alpha_j\rangle x_ix_j$ \quad $(1\le i\le n)$ is a regular sequence.
Furthermore, by Proposition~\ref{prop:zeroset} , in order to prove 
\eqref{eq:4-4} it in turn suffices to show 
that the zero set of the collection of quadratic equations 
\begin{equation} \label{eq:4-5}
\sum_{j=1}^{n}\langle\alpha_i,\alpha_j\rangle x_ix_j=0 \quad (1\le i\le n),
\end{equation}
given by the generators of the ideal $\check{J}$ of~\eqref{eq:4-2} is $\{0\}$, i.e., the equations~\eqref{eq:4-5}  have only the trivial solution.

Suppose in order to derive a contradiction that 
\eqref{eq:4-5} has a non-trivial solution $(b_1,\dots,b_n)$. In particular, setting $I=\{i\mid b_i\not=0\}$, we have $I \neq \emptyset$
and so since 
$b_i\not=0$ for $i\in I$ we obtain from~\eqref{eq:4-5} that 
\[
\sum_{j\in I}\langle\alpha_i,\alpha_j\rangle b_j=0 \quad (i\in I). 
\]
Since 
$(\langle\alpha_i,\alpha_j\rangle)_{i,j\in I}$
is a $\lvert I \rvert \times \lvert I \rvert$ square matrix which is again the Cartan matrix of a semisimple Lie algebra, it must be positive definite 
\cite[section 2.4]{hu97} and in particular non-singular. Thus the $b_i$ must be $0$ for $i \in I$, contradicting the assumption on $I$. Thus~\eqref{eq:4-5} has only the trivial solution, as desired. 
\end{proof}

\begin{rmk}
Theorem~\ref{mainthm} is a generalization to all Lie types of the 
computation given in \cite{f-h-m}. Indeed, the generators of the ideal given in \cite{f-h-m} are the same as those given above, up to a scalar factor of $1/2$. 
\end{rmk}

\begin{rmk}
In fact, Theorem~\ref{mainthm} holds also with $\mathbb{Q}$ coefficients. Indeed, since both 
$\varphi$ and $\check{\varphi}$ can be defined over $\mathbb{Z}$, if the maps become isomorphisms upon tensoring with 
$\mathbb{C}$ then they are also isomorphisms upon tensoring with $\mathbb{Q}$. 
\end{rmk}

\end{document}